\newtheorem{theorem}{Theorem}[section]   
\newtheorem{corollary}[theorem]{Corollary}
\newtheorem{lemma}[theorem]{Lemma}
\newtheorem{proposition}[theorem]{Proposition}
\newtheorem{example}[theorem]{Example}
\begin{document}

\begin{center}
{\Large
Radicals of Lie-Solvable Novikov algebras}

Alexander S. Panasenko
\end{center}

\begin{abstract}
We show that, for a Lie-solvable Novikov algebra, the Baer radical is precisely the set of all right nilpotent elements, whereas the Andrunakievich radical is the largest left quasiregular ideal. We investigate the extent to which certain properties of associative and commutative algebras equipped with a derivation are preserved under the Gelfand–Dorfman construction.

\medskip
{\bf MSC2020:} 17D25, 17A30, 17A65

\medskip
{\it Keywords}:
Novikov algebra; radical; nil algebra; Lie-solvable algebra.
\end{abstract}

\section{Introduction}

An algebra $A$ is called a {\bf Novikov algebra} if it satisfies the identities:
\[(x,y,z)=(y,x,z), \qquad (xy)z=(xz)y,\]
where $(x,y,z)=(xy)z-x(yz)$ is the associator.

The first appearance of algebras satisfying these identities dates back to 1979, in the work of I.~M.~Gel'fand and I.~Ya.~Dorfman \cite{GD1979}. Later, right-symmetric and left-commutative algebras emerged in the works of A.~A.~Balinskii and S.~P.~Novikov~\cite{BN1985,N1985}. The first systematic algebraic study of Novikov algebras was carried out by E.~I.~Zelmanov \cite{Zelm1987}, who proved that every simple finite-dimensional Novikov algebra over an algebraically closed field of characteristic~0 is isomorphic to the ground field. As a consequence of the results obtained by V.~Dotsenko, N.~Ismailov, and U.~Umirbaev in \cite{DIU}, every simple finite-dimensional Novikov algebra over a field of characteristic~0 is commutative. V.~T.~Filippov constructed the first examples of noncommutative simple Novikov algebras in \cite{Phil1989}. In \cite{Xu1996}, X.~Xu classified all simple finite-dimensional Novikov algebras over an algebraically closed field of characteristic $p>0$. A complete classification of simple finite-dimensional Novikov algebras over arbitrary fields of characteristic $p>0$ was obtained by V.~N.~Zhelyabin and A.~P.~Pozhidaev in \cite{PZ2026}; see also \cite{ZZ2024}. In \cite{Xu2001}, X.~Xu obtained a description of infinite-dimensional simple Novikov algebras over an algebraically closed field of characteristic zero under certain additional conditions, thereby correcting several errors in \cite{Osborn1994}.

Since the beginning of the 21st century, Novikov algebras satisfying polynomial identities have been studied extensively. V.~T.~Filippov established in \cite{Phil2001} that every left nil Novikov algebra of bounded index over a field of characteristic~0 is nilpotent. Later, A.~S.~Dzhumadildaev and K.~M.~Tulenbaev \cite{DT2006} showed that every right nil Novikov algebra of bounded index $n$ over a field of characteristic~0, or of characteristic greater than $n$, is right nilpotent.

A fundamental result obtained by I.~P.~Shestakov and Z.~Zhang in \cite{ShestZhang} states that, for a Novikov algebra $A$, the following conditions are equivalent:
\begin{itemize}
\item $A$ is solvable;
\item $A$ is right nilpotent;
\item $A^2$ is nilpotent.
\end{itemize}

In \cite{TUZ}, V.~N.~Zhelyabin, K.~M.~Tulenbaev, and U.~U.~Umirbaev established that the commutator ideal of any Lie-solvable Novikov algebra over a field of characteristic different from 2 is right nilpotent. They also established that the commutator ideal of every Lie-metabelian Novikov algebra is nilpotent of index 4. In \cite{DIU}, V.~Dotsenko, N.~Ismailov, and U.~Umirbaev proved that every Novikov algebra satisfying a polynomial identity over a field of characteristic~0 is Lie-solvable and right-associator-nilpotent, and has a right nilpotent commutator ideal. In \cite{ZP2024}, V.~N.~Zhelyabin and A.~S.~Panasenko proved that, for a Novikov algebra over a field of characteristic different from 2, the commutator ideal is solvable if and only if the algebra is right-associator-nilpotent. In \cite{UZ2021}, V.~N.~Zhelyabin and U.~U.~Umirbaev studied the solvability of graded Novikov algebras.

The paper is organized as follows. Section 2 is devoted to the study of right nil Novikov algebras. In particular, we prove that an extension of a right nilpotent ideal by a right nil algebra is again a right nil algebra. Section 3 is devoted to the study of radicals in Lie-solvable Novikov algebras. We prove that the locally solvable radical and the right nil radical of a Lie-solvable Novikov algebra exist and coincide with the Baer radical. We prove that every Lie-solvable Novikov algebra has a left quasiregular radical, which coincides with the Andrunakievich radical. In Section 4, we study the stability of several properties under the Gelfand--Dorfman construction and present a number of examples.

\section{Nil Novikov Algebras}

In any Novikov algebra $A$, the following identity holds:
\begin{equation}
    (x,y,z)t = (xt,y,z) = (x,yt,z)\label{(x,y,z)t}
\end{equation}
for all $x,y,z,t\in A$.

We define the {\bf left-normed powers} of an element $x\in A$ recursively as follows.

\begin{itemize}
\item $x^1 = x$;
\item For $n>1$, we define $x^n = x^{n-1}x$.
\end{itemize}

An element $x\in A$ is called \textbf{r-nilpotent} if $x^n = 0$ for some $n>0$. A Novikov algebra $A$ is called {\bf r-nil} if every element of $A$ is r-nilpotent. A one-sided ideal $I$ of $A$ is called {\bf r-nil} if every element of $I$ is r-nilpotent.

\medskip We denote by $R_x$ the operator of right multiplication by $x\in A$.

\begin{lemma}\label{lem1}
    Let $A$ be a Novikov algebra and let $x\in A$ be such that 
    \[(x^n)^2 = (x^{n+1})^2 = 0\] 
    for some $n>0$. Then $x^{2n+2} = 0$.
\end{lemma}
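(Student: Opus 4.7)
The plan is to exploit identity (1) to show that in the subalgebra generated by $x$, the product $x^c \cdot x^d$ depends only on the sum $c+d$ once $c+d \ge 2n+2$ (with $c,d \ge 1$). Once this anti-diagonal invariance is in hand, the conclusion is immediate from
\[
x^{2n+2} \;=\; x^{2n+1} \cdot x \;=\; x^{n+1} \cdot x^{n+1} \;=\; 0,
\]
where the middle equality is the invariance and the last equality is the hypothesis $(x^{n+1})^2 = 0$.

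First I would bootstrap a one-sided vanishing. From $(x^n)^2 = 0$, an induction using $(xy)z = (xz)y$ in the form $x^{m+1} \cdot x^n = (x^m x) x^n = (x^m x^n) x$ yields $x^m \cdot x^n = 0$ for every $m \ge n$; analogously, $(x^{n+1})^2 = 0$ forces $x^m \cdot x^{n+1} = 0$ for every $m \ge n+1$. In particular $(x^{n+1}, x^n, x) = (x^{n+1} x^n) x - x^{n+1}(x^n x) = 0 - (x^{n+1})^2 = 0$, and by the right-symmetry of the associator also $(x^n, x^{n+1}, x) = 0$.

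Next comes the crucial step: the \emph{shift identity} $(x^a, x^b, x) = (x^{a-1}, x^{b+1}, x)$, valid for $a \ge 2$, $b \ge 1$. This follows from applying identity (1) to $(x^{a-1}, x^b, x) \cdot x$: the first form of (1) gives the left-hand side, the second form gives the right-hand side. Hence $(x^a, x^b, x)$ depends only on $a + b$ (for $a,b \ge 1$). Choosing the split $(a,b) = (n, s-n)$ for $s = a+b \ge 2n+1$ and invoking right-symmetry,
\[
(x^n, x^{s-n}, x) = (x^{s-n}, x^n, x) = (x^{s-n} x^n)\, x - x^{s-n} \cdot x^{n+1} = 0
\]
by the one-sided vanishing. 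Therefore $(x^a, x^b, x) = 0$ whenever $a+b \ge 2n+1$ and $a, b \ge 1$.

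Expanding this vanishing associator gives $(x^a x^b) x = x^a \cdot x^{b+1}$, and independently $(x^a x^b) x = (x^a x) x^b = x^{a+1} \cdot x^b$ by $(xy)z = (xz)y$. Thus $x^{a+1} \cdot x^b = x^a \cdot x^{b+1}$ whenever $a+b \ge 2n+1$, which is precisely the anti-diagonal invariance of $x^c \cdot x^d$ for $c+d \ge 2n+2$, $c,d \ge 1$. Chaining from the balanced split $(n+1, n+1)$, where the product vanishes by hypothesis, across to the extreme split $(2n+1, 1)$, where the product equals $x^{2n+2}$ by definition of the left-normed power, gives $x^{2n+2} = 0$. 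The main thing to see is the anti-diagonal invariance; once one recognises this hidden symmetry, the argument is a short bookkeeping chain along $c+d = 2n+2$.
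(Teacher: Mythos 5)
Your proposal is correct; I verified each step, including the induction giving $x^m x^n=0$ for $m\ge n$ and $x^m x^{n+1}=0$ for $m\ge n+1$, the shift identity $(x^a,x^b,x)=(x^{a-1},x^{b+1},x)$ obtained from the two readings of $(x^{a-1},x^b,x)\cdot x$ via identity~\eqref{(x,y,z)t}, and the validity of the chains (all intermediate exponents stay $\ge 1$, and each shift has first argument $\ge 2$). The engine is the same as the paper's --- identity~\eqref{(x,y,z)t} sliding factors of $x$ between arguments of an associator of powers --- but the organization is genuinely different. The paper never isolates your one-sided vanishing lemmas; it only shows the associator is killed after a tail of right multiplications, namely $(x^n,x^m,x)R_x^{n-m+1}=0$, and then runs an induction rewriting $x^{2n+2}=(x^n\cdot x^k)R_x^{n+2-k}$ for $k\le n$, landing on $(x^n\cdot x^n)R_x^2=0$, so its endpoint consumes $(x^n)^2=0$ while $(x^{n+1})^2=0$ is used inside the displayed identity. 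Your route upgrades this to the outright vanishing $(x^a,x^b,x)=0$ for all $a+b\ge 2n+1$, $a,b\ge 1$, and hence to constancy of $x^c\cdot x^d$ along anti-diagonals $c+d\ge 2n+2$, concluding at the balanced split where $(x^{n+1})^2=0$ applies; the paper instead walks a single nested chain with trailing $R_x$'s. Your version costs slightly more bookkeeping but buys stronger, more symmetric intermediate statements that would be reusable elsewhere, whereas the paper's argument is shorter and proves exactly what is needed. One cosmetic remark: the identity $(x,y,z)=(y,x,z)$ you invoke is the symmetry of the associator in its \emph{left} two arguments (right-symmetry usually refers to $(x,y,z)=(x,z,y)$); your use of it is nonetheless correct.
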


\begin{proof} First, note that for every $m=1,\dots,n$, identity $\eqref{(x,y,z)t}$ yields
\[(x^n,x^m,x)R_x^{n-m+1} = (x^n,x^n,x)x = (x^n)^2R_x^2 - (x^n\cdot x^{n+1})x = (x^n)^2R_x^2 - (x^{n+1})^2 = 0.\]
The last equality follows from right commutativity. Thus,
\begin{equation}
(x^n,x^m,x)R_x^{n-m+1} = 0. \label{l1}
\end{equation}
We prove by induction on $k$ that 
\[x^{2n+2} = (x^n\cdot x^k)R_x^{n+2-k}\] 
for any $k=1\dots, n$. The base case $k=1$ follows directly from the definition of the left-normed powers. Assume that the statement holds for $k=m$. Then
\begin{align*}x^{2n+2} = (x^n\cdot x^m)R_x^{n+2-m}&\\ = (x^n,x^m,x)R_x^{n+2-m-1} + (x^n\cdot x^{m+1}) R_x^{n+2-m-1}& = (x^n\cdot x^{m+1})R_x^{n-m+1},
\end{align*}
where the last equality follows from the identity $\eqref{l1}$. Applying the above identity with $k=n$, we obtain
\[x^{2n+2} = (x^n\cdot x^n)R_x^2 = 0.\]
\end{proof}

\begin{lemma}\label{lem2}
    Let $I$ be an r-nil right ideal of index~2 in a Novikov algebra $A$. If $x\in A$ and $x^n\in I$ for some $n>0$, then $x^{2n+2} = 0$.
\end{lemma}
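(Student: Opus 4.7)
The plan is essentially to reduce Lemma~\ref{lem2} directly to Lemma~\ref{lem1} by producing the two vanishing squares that Lemma~\ref{lem1} requires. The only real content is checking that both $x^n$ and $x^{n+1}$ lie in $I$, after which the hypothesis on the index of $I$ kills their squares and Lemma~\ref{lem1} finishes the job.

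First I would observe that $x^n \in I$ by assumption, so since $I$ is r-nil of index~$2$, we have $(x^n)^2 = 0$. Next, because $I$ is a right ideal of $A$, the left-normed power $x^{n+1} = x^n \cdot x$ also lies in $I$ (this is where the one-sided ideal hypothesis is used, together with the definition of left-normed powers given just before Lemma~\ref{lem1}). Applying the index~$2$ condition again yields $(x^{n+1})^2 = 0$.

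With both $(x^n)^2 = 0$ and $(x^{n+1})^2 = 0$ in hand, the hypotheses of Lemma~\ref{lem1} are satisfied for this value of $n$, and that lemma gives $x^{2n+2} = 0$, which is exactly what we want.

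There is no real obstacle here; the nontrivial computational work has been done in Lemma~\ref{lem1}, and Lemma~\ref{lem2} is a clean corollary. The only thing one has to be attentive about is the left-normed convention for powers, which makes $x^{n+1} = x^n \cdot x$ (rather than $x \cdot x^n$) automatically belong to the right ideal $I$; had the convention been reversed, one would need $I$ to be two-sided or at least a left ideal.
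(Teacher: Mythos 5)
Your proposal is correct and follows exactly the paper's own argument: $x^{n+1}=x^n\cdot x\in I$ because $I$ is a right ideal, the index-2 hypothesis gives $(x^n)^2=(x^{n+1})^2=0$, and Lemma~\ref{lem1} concludes. Your remark about the left-normed convention being what makes the right-ideal hypothesis suffice is a correct and worthwhile observation, though the paper leaves it implicit.
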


\begin{proof} 
    Indeed, if $x^n\in I$, then $x^{n+1}=x^n\cdot x\in I$ and $(x^n)^2=(x^{n+1})^2=0$. Hence, by Lemma~\ref{lem1}, $x^{2n+2}=0$.
\end{proof}

\begin{lemma}\label{lem3}
    Let $I$ be an ideal of a Novikov algebra $A$. If $x^n\in I$ for some $x\in A$ and some $n>0$, then $x^{2n+2}\in I^2$.
\end{lemma}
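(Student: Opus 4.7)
The plan is to reduce this directly to Lemma~\ref{lem1} by passing to the quotient algebra $A/I^2$.

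The preliminary step, which is also the main obstacle, is to verify that $I^2$ is a two-sided ideal of $A$, so that the quotient inherits a Novikov algebra structure. That $I^2$ is a right ideal is immediate from right commutativity: for $a,b\in I$ and $c\in A$, $(ab)c = (ac)b \in I\cdot I \subseteq I^2$. For the left-ideal property I would combine the two Novikov identities: since $(c,a,b) = (a,c,b)$, one gets $c(ab) = (ca)b - (ac)b + a(cb)$, and each of the three summands lies in $I^2$ because $ca$, $ac$, $cb$ all lie in $I$.

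With $I^2 \triangleleft A$ in hand, set $\bar A = A/I^2$ and write $\bar y$ for the class of $y\in A$. By hypothesis $x^n \in I$, hence $x^{n+1} = x^n\cdot x \in I$ as well, so both $\bar x^n$ and $\bar x^{n+1}$ lie in the image of $I$, whence $(\bar x^n)^2 = (\bar x^{n+1})^2 = 0$ in $\bar A$. Lemma~\ref{lem1}, applied to $\bar x \in \bar A$, then yields $\bar x^{2n+2} = 0$, i.e.\ $x^{2n+2} \in I^2$.

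An equivalent, more computational route would be to mirror the proof of Lemma~\ref{lem1} inside $A$ itself, replacing every ``$=0$'' by ``$\in I^2$''. The identity $(x^n,x^m,x)R_x^{n-m+1} = (x^n)^2 R_x^2 - (x^{n+1})^2$ then shows this associator lies in $I^2$ (using that $I^2$ is a right ideal), and the very same induction on $k$ gives $x^{2n+2} \equiv (x^n\cdot x^k) R_x^{n+2-k} \pmod{I^2}$, ending at $x^{2n+2} \equiv (x^n)^2 R_x^2 \equiv 0 \pmod{I^2}$.
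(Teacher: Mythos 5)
Your proof is correct and takes essentially the same route as the paper: pass to the quotient $A/I^2$, where the image of $I$ has zero multiplication, and apply Lemma~\ref{lem1} --- the paper merely packages this via Lemma~\ref{lem2}, whose content (that $x^{n+1}=x^n\cdot x\in I$, so both $(\bar x^n)^2$ and $(\bar x^{n+1})^2$ vanish) you have inlined. Your explicit verification that $I^2$ is a two-sided ideal of $A$, using right commutativity and left symmetry of the associator, fills in a step the paper leaves implicit.
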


\begin{proof}
 Indeed, consider the quotient algebra $A/I^2$. It contains the ideal $I/I^2$, whose multiplication is trivial. In particular, $I/I^2$ is either the zero ideal or an r-nil ideal of index~2. In either cases, Lemma~\ref{lem2} implies that $x^{2n+2}+I^2 = 0+I^2$, and hence $x^{2n+2}\in I^2$.
\end{proof}

We define the {\bf left-normed powers} of $A$ recursively as follows.
\begin{itemize}
\item $A^{[1]}=A$;
\item $A^{[k]}=A^{[k-1]}A$.
\end{itemize}

An algebra $A$ is called \textbf{right nilpotent} if $A^{[k]}=0$ for some positive integer $k$.

\begin{theorem}\label{theo1} 
Let $I$ be an ideal of a Novikov algebra $A$. If $x^n\in I$ for some $x\in A$ and some $n>0$, then 
\[x^{s_k}\in I^{[k]}\] 
for every $k\in\mathbb{N}$, where 
\[s_1 = n, \qquad s_k=2s_{k-1}+2.\] 
In particular, if $I$ is a right nilpotent ideal and $A/I$ is an r-nil algebra, then $A$ is an r-nil algebra.
\end{theorem}

\begin{proof} 
    We prove the statement by induction on $k$. For $k=2$, the statement follows from Lemma~\ref{lem3}. Assume that $x^{s_k}\in I^{[k]}$. Then, by Lemma~\ref{lem3}, $x^{2s_k+2}\in (I^{[k]})^2$. Since $(I^{[k]})^2\subseteq I^{[k]}I = I^{[k+1]}$, it follows that $x^{2s_k+2}\in I^{[k+1]}$. 
\end{proof}

The question of whether every extension of an r-nil algebra by an r-nil algebra is again r-nil remains open in the class of Novikov algebras. An affirmative answer to this question would imply the existence of a nilradical in the class of Novikov algebras.

\section{Lie-solvable Novikov Algebras}

Throughout this section, we assume that $F$ is a field of characteristic different from~$2$.

Let $I$ be an ideal of a Novikov algebra $A$. Then the subspace $[I,I]$ spanned by all commutators of elements of $I$ is an ideal of $A$. We define the {\bf derived series} of $A$ recursively by
\[A^{(0)_L}=A, \quad A^{(n+1)_L}=[A^{(n)_L},A^{(n)_L}].\]
A Novikov algebra is called {\bf Lie-solvable} if $A^{(n)_L}=0$ for some $n\ge 0$.

The class of Lie-solvable Novikov algebras is rather extensive. In characteristic~0, it coincides with all PI-algebras and in particular, contains all finite-dimensional algebras~\cite{DIU}.

An ideal $I$ of an algebra is called {\bf trivial} if its multiplication is identically zero, that is $I^2 = 0$.

\medskip Let us recall the definition of the Baer radical. Set $\mathcal{B}_0(A)=0$, and define $\mathcal{B}_1(A)$ to be the sum of all trivial ideals of an algebra $A$. Next, we define $\mathcal{B}_\alpha(A)$ recursively. If $\alpha$ is a limit ordinal, then \[\mathcal{B}_{\alpha}(A)=\cup_{\beta<\alpha}\mathcal{B}_{\beta}(A).\] 
If $\alpha$ is a successor ordianl, then $\mathcal{B}_{\alpha}(A)$ is defined as the full preimage in $A$ of the ideal $\mathcal{B}_1(A/\mathcal{B}_{\alpha - 1}(A))$ under the natural quotient map $A\to A/\mathcal{B}_{\alpha - 1}(A)$. In other words, $$\mathcal{B}_\alpha(A)/\mathcal{B}_{\alpha - 1}(A)=\mathcal{B}_1(A/\mathcal{B}_{\alpha - 1}(A)).$$ We denote by $\mathcal{B}(A)$ the term $\mathcal{B}_{\delta}(A)$, where $\delta$ is an ordinal satisfying $\delta>|A|$. The ideal $\mathcal{B}(A)$ is called the \textit{\textbf{Baer radical}} of $A$.

In \cite{P2022}, it was shown that the class of Novikov algebras admits a Baer radical; that is, the class of algebras coinciding with their Baer radical forms a radical class within the variety of Novikov algebras.

\medskip In \cite{TUZ}, it was proved that the commutator ideal $[A,A]$ of a Lie-solvable Novikov algebra over a field of characteristic $\neq 2$ is right nilpotent. In \cite{ShestZhang}, it was shown that the notions of solvability and right nilpotency coincide in Novikov algebras. We will freely use these facts in what follows.

\begin{lemma}\label{lem4} 
Let $A$ be a Lie-solvable Novikov algebra. Then the Baer radical of $A$ is given by
\[\mathcal{B}(A)=\{x\in A\mid x^n = 0 \text{\textup{ for some }} n>0\}.\]
\end{lemma}

\begin{proof} Let $x\in\mathcal{B}(A)$. Since $\mathcal{B}(A)$ contains all solvable ideals and every right nilpotent ideal is solvable, it foolows that $[A,A]\subseteq \mathcal{B}(A)$. The ideal $\mathcal{B}(A)/[A,A]$ is the Baer radical ideal of the quotient algebra $A/[A,A]$. Since the Baer radical of an associative commutative algebra coincides with its nilradical, it follows that $\mathcal{B}(A)/[A,A]$ is an r-nil ideal. Since the ideal $[A,A]$ is right nilpotent, it follows by Theorem~\ref{theo1} that $\mathcal{B}(A)$ is an r-nil ideal.

Let $x\in A$ satisfy $x^n = 0$. Let $\overline{x}$ denote the image of $x$ in the quotient algebra $A/[A,A]$. Since $\overline{x}$ is nilpotent, we have $\overline{x}\in\mathcal{B}(A/[A,A])$, because the Baer radical of an associative commutative algebra coincides with its nilradical. Since Baer radical exists in the class of Novikov algebras and the ideal $[A,A]$ is solvable, it follows that the preimage of $\mathcal{B}(A/[A,A])$ is contatined in $\mathcal{B}(A)$. Hence, $x\in \mathcal{B}(A)$. 
\end{proof}

In the class of associative commutative algebras, the Baer radical coincides with both the locally nilpotent radical and the nilradical. We obtain an analogue of this result for Lie-solvable Novikov algebras (note that nilpotency and solvability do not coincide even in the finite-dimensional case).

\begin{theorem}\label{theo2}
    In the class of Lie-solvable Novikov algebras, the following classes coincide:
    \begin{enumerate}
        \item The class of Baer-radical algebras;
        \item The class of r-nil algebras;
        \item The class of locally solvable algebras.
    \end{enumerate}
\end{theorem}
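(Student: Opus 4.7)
The plan is to establish the implications $(1)\Leftrightarrow(2)$ and $(2)\Leftrightarrow(3)$, working throughout inside the class of Lie-solvable Novikov algebras. The equivalence $(1)\Leftrightarrow(2)$ is essentially contained in Lemma~\ref{lem4}: since that lemma identifies $\mathcal{B}(A)$ with the set of all r-nilpotent elements of $A$, the identity $A=\mathcal{B}(A)$ is exactly the statement that $A$ is r-nil.

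For $(3)\Rightarrow(2)$ I would argue one element at a time. Given $x\in A$, the subalgebra $\langle x\rangle$ it generates is finitely generated, hence solvable by local solvability, and hence right-nilpotent by the Shestakov--Zhang theorem cited above. Thus $\langle x\rangle^{[n]}=0$ for some $n$, and since the left-normed power $x^n$ lies in $\langle x\rangle^{[n]}$, we conclude $x^n=0$, so $A$ is r-nil.

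The substantive direction is $(2)\Rightarrow(3)$. Let $B$ be an arbitrary finitely generated subalgebra of the Lie-solvable r-nil algebra $A$; the task is to show $B$ is solvable. Lie-solvability passes to subalgebras, so the result from~\cite{TUZ} yields that $[B,B]$ is right-nilpotent. The quotient $\bar B=B/[B,B]$ is then an associative commutative algebra which is finitely generated and nil. A short pigeonhole argument on monomials in the finitely many nilpotent generators $\bar y_1,\dots,\bar y_r$ (each $\bar y_i^{m_i}=0$) shows that every monomial of total degree exceeding $\sum_i(m_i-1)$ vanishes, so $\bar B^{\,k}=0$ for some $k$, i.e.\ $B^{[k]}\subseteq[B,B]$. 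Combining this with the solvability of $[B,B]$ and iterating the derived series then gives the solvability of $B$.

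The only non-formal step is the implication ``finitely generated commutative associative nil $\Rightarrow$ nilpotent,'' which is genuinely elementary via the pigeonhole argument above, so I do not anticipate a serious obstacle. The overall structure mirrors the proof of Lemma~\ref{lem4}, with the role of the full algebra played instead by an arbitrary finitely generated subalgebra.
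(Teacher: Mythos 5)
Your proof is correct and follows essentially the same route as the paper's: the equivalence $(1)\Leftrightarrow(2)$ via Lemma~\ref{lem4}, the implication $(3)\Rightarrow(2)$ through the Shestakov--Zhang solvable/right-nilpotent equivalence, and the substantive direction by passing to a finitely generated subalgebra, reducing modulo the commutator ideal to a finitely generated associative commutative nil (hence nilpotent) algebra, and lifting solvability back through the right-nilpotent commutator ideal of \cite{TUZ} via the derived series. The only cosmetic differences are that you prove $(2)\Rightarrow(3)$ working intrinsically with $[B,B]$ (Lie-solvability being inherited by subalgebras) where the paper proves $(1)\Rightarrow(3)$ using $B^{(m)}\subseteq[A,A]$ in the ambient algebra, and that you spell out the pigeonhole argument the paper cites as known --- just note that what your pigeonhole actually yields, and what the derived-series iteration needs, is the full power $\bar B^{\,k}=0$ (so $B^{(m)}\subseteq[B,B]$ for $2^m\ge k$), not merely the left-normed power $B^{[k]}\subseteq[B,B]$ as your parenthetical states.
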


\begin{proof} The classes of r-nil algebras and Baer-radical Lie-solvable Novikov algebras coincide by Lemma~\ref{lem4}.

Let $A$ be a locally solvable Novikov algebra. Then it is locally right nilpotent, and hence an r-nil algebra. Therefore, the class of locally solvable Novikov algebras is contained in the class of r-nil Novikov algebras.

Let $A$ be a Baer-radical Lie-solvable Novikov algebra. Let $x_1,\dots,x_n$ be arbitrary elements of $A$. Since $A/[A,A]$ is an associative commutative Baer-radical algebra, it is locally solvable; hence, the subalgebra $B$ generated by the elements $x_1,\dots,x_n$ satisfies $B^{(m)}\subseteq [A,A]$ for some $m>0$. Since the ideal $[A,A]$ is solvable, it follows that $B^{(m+s)}=(B^{(m)})^{(s)}=0$ for some $s>0$. Hence, $A$ is locally solvable. 
\end{proof}

\medskip To avoid any ambiguity, we begin by recalling the following definition. An associative and commutative algebra is called an integral domain if it contains no nonzero zero divisors.

\begin{corollary}\label{cor1} Let $A$ be a Lie-solvable Novikov algebra. Then the Baer radical $\mathcal{B}(A)$ is precisely the intersection of all ideals $I\subseteq A$ such that $A/I$ is an integral domain.
\end{corollary}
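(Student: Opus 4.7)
The plan is to factor the problem through the commutative-associative quotient $\bar{A}=A/[A,A]$, reducing the statement to the well-known description of the Baer (= nil) radical of a commutative associative algebra as the intersection of its prime ideals.

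Let $J$ denote the intersection in question. First I would prove $\mathcal{B}(A)\subseteq J$. Pick any ideal $I$ with $A/I$ an integral domain. Because $A/I$ is in particular commutative, $[A,A]\subseteq I$, and because it is associative and commutative the left-normed power in $A/I$ coincides with the ordinary power. By Lemma~\ref{lem4} any $x\in\mathcal{B}(A)$ satisfies $x^n=0$ for some $n$, so the image $\bar{x}$ in $A/I$ satisfies $\bar{x}^n=0$; since $A/I$ has no nonzero zero divisors, $\bar{x}=0$, i.e.\ $x\in I$. This gives one inclusion.

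For the reverse inclusion, I would use the bijective correspondence between ideals of $A$ containing $[A,A]$ and ideals of $\bar{A}$. Let $\pi\colon A\to \bar{A}$ be the projection. Any $I$ appearing in the intersection defining $J$ contains $[A,A]$ (by the observation above), and the quotient $A/I\cong \bar{A}/\pi(I)$ is an integral domain precisely when $\pi(I)$ is a prime ideal of $\bar{A}$. Conversely, the preimage of any prime ideal of $\bar{A}$ is such an $I$. Hence
\[
J \;=\; \pi^{-1}\!\Bigl(\bigcap_{P\subset \bar{A}\text{ prime}} P\Bigr) \;=\; \pi^{-1}\!\bigl(\mathrm{Nil}(\bar{A})\bigr) \;=\; \pi^{-1}\!\bigl(\mathcal{B}(\bar{A})\bigr),
\]
where the middle equality is the classical fact (in the associative commutative setting) that the nilradical is the intersection of all prime ideals, and the third equality is $\mathrm{Nil}(\bar{A})=\mathcal{B}(\bar{A})$ in the associative commutative case.

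Finally I would invoke, exactly as in the proof of Lemma~\ref{lem4}, the identity $\pi^{-1}(\mathcal{B}(\bar{A}))=\mathcal{B}(A)$: one inclusion holds because the preimage of a Baer-radical ideal through a projection whose kernel $[A,A]$ is itself solvable (hence contained in $\mathcal{B}(A)$) must lie in $\mathcal{B}(A)$, and the other holds because $\pi(\mathcal{B}(A))\subseteq\mathcal{B}(\bar{A})$ by general properties of the Baer radical. Combining with the previous display gives $J=\mathcal{B}(A)$. The only step requiring care is the correspondence between ideals $I$ with $A/I$ an integral domain and prime ideals of $\bar{A}$, and in particular the observation that $A/I$ being an integral domain forces $[A,A]\subseteq I$ from the outset; everything else is a direct application of Lemma~\ref{lem4} and the standard commutative algebra fact about prime ideals.
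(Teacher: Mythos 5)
Your proof is correct, but it takes a genuinely different route from the paper's. The paper works entirely at the level of $A$ itself: it invokes the radical-theoretic fact that the Baer radical equals the intersection of all ideals $I$ with $A/I$ prime, and then observes that any semiprime Lie-solvable Novikov algebra is commutative --- since $[A,A]$ is right nilpotent by \cite{TUZ}, hence a solvable ideal, hence zero in a semiprime algebra --- so prime quotients of $A$ are precisely integral domains; this three-line argument also delivers Corollary~\ref{cor2} for free. You instead factor everything through $\bar{A}=A/[A,A]$: you match the ideals of $A$ with integral-domain quotient against the prime ideals of the commutative associative algebra $\bar{A}$, use the classical equality of the nilradical with the intersection of primes, and transfer back via $\pi^{-1}(\mathcal{B}(\bar{A}))=\mathcal{B}(A)$, which, exactly as in Lemma~\ref{lem4}, rests on \cite{P2022} (the Baer radical is a radical in the class of Novikov algebras, hence closed under extensions and stable under surjections) together with the solvability of $[A,A]$. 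What your approach buys is self-containedness: it needs only classical commutative algebra plus the radical property from \cite{P2022}, avoiding the prime-quotient characterization of the Baer radical in the non-associative setting, and your first inclusion via Lemma~\ref{lem4} (domains have no nonzero nilpotents) is completely elementary. Two small points you should make explicit: in the non-unital setting, a nonzero commutative associative algebra is prime if and only if it is an integral domain --- for $ab=0$ with $a,b\neq 0$ one checks $(a)(b)=0$ for the one-generated ideals $(a)=Fa+\bar{A}a$ --- which is what your ``precisely when'' step actually requires; and the degenerate case $I=A$ (zero quotient) should be noted as harmless, since including or excluding it does not change the intersection.
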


\begin{proof} It is well known that the Baer radical is the intersection of all ideals $I$ of $A$ such that $A/I$ is prime. However, in a Lie-solvable semiprime Novikov algebra $A$, the commutator square $[A,A]$ vanishes. This means that every semiprime Novikov algebra is commutative. It follows that every prime quotient $A/I$ is an integral domain.
\end{proof}

\begin{corollary}\label{cor2}
    Every semiprime Lie-solvable Novikov algebra is a subdirect product of integral domains.
\end{corollary}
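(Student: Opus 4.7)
The plan is to observe that this is essentially a direct consequence of Corollary~\ref{cor1} together with the standard characterization of semiprime algebras via the Baer radical.

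First I would recall that an algebra $A$ is semiprime precisely when $\mathcal{B}(A)=0$: the Baer radical is the intersection of all prime ideals, and semiprimeness is the condition that this intersection vanishes. Thus for a semiprime Lie-solvable Novikov algebra $A$, Corollary~\ref{cor1} gives
\[
\bigcap_{I\in\mathcal{I}} I \;=\; \mathcal{B}(A) \;=\; 0,
\]
where $\mathcal{I}$ is the collection of all ideals $I$ of $A$ such that $A/I$ is an integral domain.

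Next I would assemble the subdirect product. Consider the canonical homomorphism
\[
\varphi\colon A \longrightarrow \prod_{I\in\mathcal{I}} A/I,\qquad x\longmapsto (x+I)_{I\in\mathcal{I}}.
\]
Its kernel is exactly $\bigcap_{I\in\mathcal{I}} I = 0$, so $\varphi$ is injective. Each component projection $\pi_I\circ\varphi\colon A\to A/I$ is surjective, which is the defining property of a subdirect product. Since every $A/I$ in the factor is an integral domain by choice of $\mathcal{I}$, this realizes $A$ as a subdirect product of integral domains.

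There is no real obstacle here; the only subtlety is making sure that semiprimeness really means $\mathcal{B}(A)=0$ in the Novikov setting, which is fine since the Baer radical exists in the class of Novikov algebras (as cited from \cite{P2022} in the text) and is the intersection of prime ideals by its very construction from trivial ideals. Everything else is formal once Corollary~\ref{cor1} is in hand.
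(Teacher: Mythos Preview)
Your argument is correct and is exactly the intended one: the paper states Corollary~\ref{cor2} without proof, treating it as an immediate consequence of Corollary~\ref{cor1} together with the equivalence of semiprimeness with $\mathcal{B}(A)=0$, and you have simply written out the standard subdirect-product verification. The only cosmetic remark is that the equality of the Baer radical with the intersection of prime ideals is a theorem rather than part of its transfinite construction from trivial ideals, but this does not affect the validity of your proof.
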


\textbf{Definition.} An element $x\in A$ is called \textbf{left quasiregular} if there exists an element $y\in A$ such that $x+y=yx$. An algebra is called  \textbf{left quasiregular} if every element of the algebra is left quasiregular.

Let $\mathcal{C}$ denote the class of all subdirectly irreducible Novikov algebras with an idempotent heart. An ideal $I$ of Novikov algebra $A$ is called a $\mathcal{C}$-ideal if the quotient algebra $A/I$ belongs to the class $\mathcal{C}$. The class $\mathcal{A}$ consisting of all Novikov algebras that do not admit homomorphic images in the class $\mathcal{C}$ is a radical class (proved in \cite{Zhevl}). The largest ideal of an algebra $A$ belonging to the class $\mathcal{A}$ is denoted by $\mathcal{A}(A)$ and is called the \textbf{Andrunakievich radical} of the algebra $A$. The Andrunakievich radical exists in the variety of all non-associative algebras.

\begin{lemma}\label{lem5}
Let $A$ be a Lie-solvable Novikov algebra, $I$ an ideal of $A$, and suppose that both $I$ and $A/I$ are left quasiregular. Then $A$ is left quasiregular as well. Moreover, $A=\mathcal{A}(A)$.
\end{lemma}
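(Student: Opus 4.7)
The plan is to establish left-quasiregularity of $A$ by a lifting-with-correction argument, absorbing the non-associativity via Lie-solvability, and then to derive $A=\mathcal{A}(A)$ from the extension-closedness of the Andrunakievich radical class together with the inclusion of left-quasiregular Lie-solvable Novikov algebras in $\mathcal{A}$.

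For left-quasiregularity, take $x\in A$ and lift via left-quasiregularity of $A/I$ to obtain $y_0\in A$ with $i:=x+y_0-y_0x\in I$. Looking for a quasi-inverse of the form $y=y_0+y_1$ with $y_1\in I$ reduces (since $y_1x\in I$) to solving $y_1x-y_1=i$ inside $I$. By left-quasiregularity of $I$, pick $z\in I$ with $i+z=zi$ and try $y_1=z-zy_0$ (modelled on the classical associative formula). Expanding $y\circ x:=y+x-yx$ via the Novikov identities $(ab)c=(ac)b$ and $a(bc)-b(ac)=[a,b]c$ (the latter following from $(a,b,c)=(b,a,c)$) shows that the residual defect equals exactly the associator $(z,y_0,x)=[y_0x,z]-[y_0,z]x\in[A,A]$.

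To kill this residual defect, exploit that $[A,A]$ is right-nilpotent by \cite{TUZ} and \cite{ShestZhang}, and hence itself left-quasiregular (an explicit left-quasi-inverse of $a$ with $a^n=0$ is $-a-a^2-\cdots-a^{n-1}$). I would then induct on the right-nilpotency class of $[A,A]$: the base $[A,A]=0$ makes $A$ commutative and hence associative, so the classical associative lifting gives an exact quasi-inverse; the inductive step passes to $A/N$ for a well-chosen ideal $N$ inside a deep term of the right-normed series of $[A,A]$ (so that $[A/N,A/N]$ has smaller class), applies the inductive hypothesis, and closes the residual correction within $N$ using the trivial multiplicative structure of the deepest nonzero term.

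For $A=\mathcal{A}(A)$: since $\mathcal{A}$ is closed under extensions, it suffices to show that every left-quasiregular Lie-solvable Novikov algebra lies in $\mathcal{A}$. A homomorphic image in $\mathcal{C}$ would be a subdirectly irreducible, left-quasiregular algebra with idempotent heart $H=H^2\ne 0$; in characteristic $\ne 2$, an idempotent element $e=e^2\in H$ together with a left-quasi-inverse $f$ (so $e+f=fe$) gives a contradiction via a short computation with the Novikov identities, namely $(fe)e=f$ on one hand (using $(ab)c=(ac)b$ together with $(a,b,c)=(b,a,c)$) while $(fe)e=(e+f)e=e^2+fe=e+fe=2e+f$ on the other, forcing $2e=0$ and hence $e=0$; combined with the extraction of an idempotent from $H=H^2$ using the structural properties of $\mathcal{C}$-algebras under Lie-solvability, this closes the argument. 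The main obstacle is the inductive correction step for left-quasiregularity: showing that the associator-defect descends cleanly into deeper terms of the right-normed series of $[A,A]$ and terminates by right-nilpotency, despite each correction involving multiplication by the external element $x$ rather than by the previous defect.
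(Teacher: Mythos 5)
Your skeleton for the first assertion is essentially the paper's (reduce to the associative--commutative quotient $A/[A,A]$, where extension-stability of left-quasiregularity is classical, then climb the right-nilpotency series of $[A,A]$, correcting in layers with trivial multiplication), and your preliminary observations are sound: the defect of the classical composite is indeed the associator, and your identity $(z,y_0,x)=[y_0x,z]-[y_0,z]x\in[A,A]$ is correct. But the step you yourself flag as ``the main obstacle'' is exactly the content of the lemma, and your worry is justified: with $x+y-yx=-v$, $v$ in the last nonzero layer $N$ (so $N[A,A]=0$ and $v^2=v[y,x]=(v,v,y)=0$), the natural candidate $y+v-vy$ (your $z\circ y_0$ with $z=v$, since $v$ is a left quasi-inverse of $-v$) leaves the residual defect $(v,y,x)$, which lies in $N$ but does \emph{not} vanish, because $v(yx)$ and $(vy)x$ involve right multiplication by the external elements $x$ and $y$, against which $N[A,A]=0$ gives nothing. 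The paper closes the induction with the corrected quasi-inverse $y+v-vy+(v,y,y)$: the verification uses identity \eqref{(x,y,z)t} in the form $(v,y,y)x=(v,yx,y)$, then the substitution $y-yx=-v-x$, right commutativity $(vy)x=(vx)y$, and finally $v^2=v[y,x]=(v,v,y)=0$ to kill every term. Without this extra associator correction your induction does not close, so the first half of your proposal has a genuine hole at its decisive point.

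The second assertion is where your argument actually breaks rather than merely stops short. First, ``idempotent heart'' means $H=H^2$ as an ideal; such an ideal need not contain any idempotent \emph{element}, so the ``extraction of an idempotent from $H=H^2$'' is unfounded. Second, the identity $(fe)e=f$ does not follow from $(ab)c=(ac)b$ and $(a,b,c)=(b,a,c)$ and is simply false: already in a unital associative commutative algebra (which is Novikov) one has $(fe)e=fe^2=fe$, not $f$. Your computation is a distortion of the classical associative argument ($e+f=fe$ gives $e^2+fe=(fe)e=fe^2=fe$, whence $e=0$), which fails in Novikov algebras precisely because $(fe)e\neq f(ee)$. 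The paper avoids idempotent elements entirely and, notably, does not even need the left-quasiregularity of $A$ for this half: $A/[A,A]$ is associative, commutative and left-quasiregular (by the extension argument applied to the image $\overline{I}$ of $I$), hence Jacobson-radical, hence $\mathcal{A}$-radical since the Jacobson and Andrunakievich radicals coincide for associative--commutative algebras; the ideal $[A,A]$ is solvable by \cite{TUZ}, hence $\mathcal{A}$-radical, since a solvable algebra has no nonzero semiprime homomorphic image and in particular none in $\mathcal{C}$; extension-closure of the radical class $\mathcal{A}$ then yields $A=\mathcal{A}(A)$. If you want to keep your contradiction-style argument, the repair is structural rather than elementwise: a $\mathcal{C}$-image $C$ of $A$ is Lie-solvable, and if $[C,C]\neq 0$ the heart satisfies $H\subseteq[C,C]$, making $H$ both solvable and perfect, hence zero --- so $[C,C]=0$ and $C$ is associative--commutative, where the coincidence of the Jacobson and Andrunakievich radicals supplies the contradiction.
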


\begin{proof} Let $\overline{I}$ denote the image of $I$ in the quotient algebra $A/[A,A]$. It follows that $A/\overline{I}$ is left quasiregular, since it is a homomorphic image of $A/I$. On the other hand, $\overline{I}$ is left quasiregular, since it is an image of the left quasiregular algebra $I$. But the quotient algebra $A/[A,A]$ is associative and commutative, and left quasiregularity is preserved under extensions in the variety of associative commutative algebras. Therefore, the algebra $A/[A,A]$ is left quasiregular. Since the Jacobson radical coincides with the Andrunakievich radical in the variety of associative commutative algebras, it follows that $\mathcal{A}(A/[A,A]) = A/[A,A]$. Moreover, every solvable algebra is $\mathcal{A}$-radical, since no solvable algebra can be homomorphically mapped onto a semiprime algebra, in particular onto a subdirectly irreducible algebra with an idempotent heart. Therefore, $\mathcal{A}([A,A])=[A,A]$. Since the Andrunakievich radical exists, it follows that $\mathcal{A}(A)=A$.

We will prove by induction on $n$ that, for every $x\in A$ and every $n>0$, the element $x+[A,A]^{[n]}$ is left quasiregular in the quotient algebra $A/[A,A]^{[n]}$. The case $n=1$ follows from the fact established above that the algebra $A/[A,A]$ is left quasiregular. We will show that $x+[A,A]^{[n+1]}$ is left quasiregular in the algebra $A/[A,A]^{[n+1]}$. By the induction hypothesis, there exists an element $y\in A$ such that $x+y-yx = -v\in [A,A]^{[n]}$. Note that $v^2 = v[y,x] = (v,v,y) = 0$. Then
\begin{align*}
x+(y+v-vy+(v,y,y))-(y+v-vy+(v,y,y))x & \\ =x+y+v-vy-yx-vx+(vy)x + (v,y,y)-(v,yx,y) & \\  =-vy-vx+(vy)x +(v,y-yx,y) & \\ =v^2-v(yx) +(vy)x + (v,-v-x,y) & \\ =v^2-v[y,x]-v(xy)+(vx)y - (v,x,y) - (v,v,y) &= 0.
\end{align*}
Thus, for every $n>0$ the element $x+[A,A]^{[n]}$ is left quasiregular in the quotient algebra $A/[A,A]^{[n]}$. By the results of \cite{TUZ}, the ideal $[A,A]$ is right nilpotent. Thus, there exists a positive integer $n$ such that $[A,A]^{[n]}=0$. Hence, $x$ is a left quasiregular element in $A$.
\end{proof}

\begin{theorem}\label{theo3} 
    In the class of Lie-solvable Novikov algebras, left quasiregular algebras coincide with $\mathcal{A}$-radical algebras.
\end{theorem}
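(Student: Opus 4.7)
The plan is to derive both inclusions from Lemma~\ref{lem5}, applied with two different choices of the auxiliary ideal $I$.

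The easy direction, that every left-quasiregular Lie-solvable $A$ is $\mathcal{A}$-radical, follows at once by taking $I = A$ in Lemma~\ref{lem5}: both $I = A$ (by hypothesis) and $A/I = 0$ (trivially) are left-quasiregular, so the ``moreover'' clause of Lemma~\ref{lem5} gives $A = \mathcal{A}(A)$.

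For the converse, suppose $A$ is Lie-solvable with $\mathcal{A}(A) = A$, and take $I = [A,A]$. To apply Lemma~\ref{lem5} I must verify that both $[A,A]$ and $A/[A,A]$ are left-quasiregular. For the quotient: since $\mathcal{A}$ is a radical (hence closed under homomorphic images), $\mathcal{A}(A) = A$ forces $\mathcal{A}(A/[A,A]) = A/[A,A]$; and since $A/[A,A]$ is associative and commutative, where the Andrunakievich radical coincides with the Jacobson radical (as used in the proof of Lemma~\ref{lem5}), this quotient is Jacobson-quasiregular, which in the commutative case is precisely the paper's notion of left-quasiregularity (because $yx = xy$ there). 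For $[A,A]$ itself: by~\cite{TUZ} the commutator ideal is right-nilpotent, say $[A,A]^{[n]} = 0$, so every $x \in [A,A]$ satisfies $x^n = 0$; setting $y = -x - x^2 - \cdots - x^{n-1}$ yields $yx = -x^2 - \cdots - x^{n-1} = x + y$, so $x$ is left-quasiregular and therefore so is $[A,A]$.

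With both hypotheses verified, Lemma~\ref{lem5} gives that $A$ is left-quasiregular, completing the theorem. The only ingredient beyond Lemma~\ref{lem5} itself is the short computation producing an explicit left-quasi-inverse in a right-nilpotent Novikov algebra; this is the only place that needs a small \emph{ad hoc} argument and is arguably the main obstacle, though a routine one. The remaining steps use only the closure of $\mathcal{A}$ under quotients and the standard identification of the Andrunakievich and Jacobson radicals in the associative commutative variety.
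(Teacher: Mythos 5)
Your proposal is correct and follows essentially the same route as the paper: both directions are obtained from Lemma~\ref{lem5}, the converse by taking $I=[A,A]$, using closure of the radical class $\mathcal{A}$ under homomorphic images together with the coincidence of the Andrunakievich and Jacobson radicals in the associative--commutative variety for the quotient, and right-nilpotency of $[A,A]$ from~\cite{TUZ} for the ideal. The only deviation is that where the paper cites~\cite{P2024} for ``right-nilpotent implies left-quasiregular,'' you verify it directly via the quasi-inverse $y=-x-x^2-\cdots-x^{n-1}$, which is a valid (and self-contained) substitute since $x^k\cdot x = x^{k+1}$ by the definition of left-normed powers, $x^n\in[A,A]^{[n]}=0$, and $y$ lies in $[A,A]$.
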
    

\begin{proof} Let $A$ be a Lie-solvable Novikov algebra. We define $J(A)$ to be the sum of all left quasiregular idealss of $A$. Let $I$ and $K$ be left quasiregular ideals. Then $(I+K)/K\simeq I/(I\cap K)$, and hence $I+K$ is left quasiregular by Lemma~\ref{lem5}. By induction, any finite sum of left quasiregular ideals is left quasiregular. Consequently, the sum of all left quasiregular ideals is left quasiregular, since each of its elements is contained in a finite sum of left quasiregular ideals.

If the algebra $A/J(A)$ contains a left quasiregular ideal $\overline{L}$, then its preimage $L$ in $A$ is left quasiregular by Lemma~\ref{lem5}. However, it follows that $L\subseteq J(A)$, and hence $\overline{L}=\overline{0}$.

Lemma~\ref{lem5} established that every left quasiregular Lie-solvable Novikov algebra is $\mathcal{A}$-radical. Conversely, suppose that $A=\mathcal{A}(A)$. Then the algebra $A/[A,A]$ is $\mathcal{A}$-radical as well, and therefore is quasiregular. On the other hand, the ideal $[A,A]$ is right nilpotent by \cite{TUZ}; hence, by \cite{P2024}, it is left quasiregular. Therefore, by Lemma~\ref{lem5}, the algebra $A$ is left quasiregular. 
\end{proof}

\begin{corollary}\label{cor3}
    Let $A$ be a Lie-solvable Novikov algebra. Then the left quasiregular radical of $A$ coincides with the intersection of all ideals $I$ such that $A/I$ is a field.
\end{corollary}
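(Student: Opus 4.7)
The plan is to invoke Theorem~\ref{theo3} to identify the left-quasiregular radical with the Andrunakievich radical $\mathcal{A}(A)$ (the radical $J(A)$ constructed in the proof of Theorem~\ref{theo3} is left-quasiregular and Lie-solvable, hence $\mathcal{A}$-radical, while $\mathcal{A}(A)$ is $\mathcal{A}$-radical and Lie-solvable, hence left-quasiregular), and then prove the equality $\mathcal{A}(A)=\bigcap\{I\triangleleft A\mid A/I\text{ is a field}\}$ by two inclusions, in the spirit of Corollary~\ref{cor1} but with ``integral domain / prime'' replaced by ``field / Jacobson-primitive''.

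For the inclusion $\mathcal{A}(A)\subseteq\bigcap\{I\mid A/I\text{ is a field}\}$, I would check that every such quotient $A/I$ lies in the class $\mathcal{C}$. Since a field has no proper nonzero ideals, it is simple, and in particular subdirectly irreducible with heart equal to the whole algebra; this heart is idempotent because a field contains a unit. Thus $I$ is a $\mathcal{C}$-ideal and must contain $\mathcal{A}(A)$.

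For the reverse inclusion, I would pass to $\bar{A}=A/\mathcal{A}(A)$. The key step is to observe that $[A,A]\subseteq\mathcal{A}(A)$: by \cite{TUZ} the commutator ideal is right-nilpotent hence solvable, and, as used in the proof of Lemma~\ref{lem5}, every solvable algebra is $\mathcal{A}$-radical (it admits no homomorphism onto a subdirectly irreducible algebra with idempotent heart). Therefore $\bar{A}$ is associative and commutative with $\mathcal{A}(\bar{A})=0$. For associative commutative algebras the Andrunakievich radical coincides with the Jacobson radical, which is the intersection of maximal modular ideals with field quotients. Hence $\bar{A}$ is a subdirect product of fields $\bar{A}/\bar{M}_\alpha$, and pulling these $\bar{M}_\alpha$ back to ideals $M_\alpha\supseteq\mathcal{A}(A)$ of $A$ yields a family with $A/M_\alpha$ a field and $\bigcap_\alpha M_\alpha=\mathcal{A}(A)$, giving the desired inclusion.

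The main obstacle I expect is purely the first conceptual step — reducing from the left-quasiregular radical to $\mathcal{A}(A)$ and then to the commutative semisimple quotient $\bar{A}$. Once those reductions are in place, the argument is a direct transcription of the classical structure of Jacobson-semisimple commutative rings; no new calculation inside the Novikov setting is required, since all the non-trivial Novikov-theoretic content has already been absorbed into Theorem~\ref{theo3} and Lemma~\ref{lem5}.
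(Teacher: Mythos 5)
Your proof is correct, and its second half takes a genuinely different route from the paper's. The opening reduction is shared: Theorem~\ref{theo3} identifies the left-quasiregular radical with $\mathcal{A}(A)$, and the solvability of $[A,A]$ forces $[A,A]\subseteq\mathcal{A}(A)$, so everything downstream is associative and commutative. From there the paper argues in one stroke: it quotes from \cite{P2024} the characterization of $\mathcal{A}(A)$ as the intersection of all ideals $I$ with $A/I\in\mathcal{C}$, and then observes that each such quotient, being an associative commutative subdirectly irreducible algebra with idempotent heart, is a field (the converse, that a field lies in $\mathcal{C}$, being obvious), so the two families of ideals coincide outright. You instead split into two inclusions and, for the harder one, bypass the intersection characterization of \cite{P2024} entirely: you pass to $\bar{A}=A/\mathcal{A}(A)$, use idempotency of the Andrunakievich radical to get $\mathcal{A}(\bar{A})=0$, invoke $\mathcal{A}=J$ on associative commutative algebras (the same fact the paper uses inside Lemma~\ref{lem5}), and then apply the classical subdirect decomposition of a Jacobson-semisimple commutative algebra into fields, pulling the maximal modular ideals back to $A$. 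What each buys: the paper's version is shorter because \cite{P2024} already packages the subdirect decomposition you re-derive; yours trades that external input for the standard Kurosh--Amitsur property $\mathcal{A}(A/\mathcal{A}(A))=0$ plus textbook commutative Jacobson theory, which makes the mechanism more self-contained. Two small points you should make explicit: first, in your easy inclusion, the reason $\mathcal{A}(A)\subseteq I$ whenever $A/I$ is a field is that the image of $\mathcal{A}(A)$ in $A/I$ is an $\mathcal{A}$-radical ideal of a field, hence zero (a field maps identically onto a member of $\mathcal{C}$, so cannot be $\mathcal{A}$-radical); second, the degenerate case $A=\mathcal{A}(A)$, where no field quotients exist and the intersection over the empty family is $A$ by convention, is silently consistent in both arguments.
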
 

\begin{proof} 
    By Theorem~\ref{theo3}, the left quasiregular radical coincides with the Andrunakievich radical. By \cite{P2024}, the Andrunakievich radical of $A$ coincides with the intersection of all ideals $I$ such that $A/I$ is a subdirectly irreducible algebra with an idempotent heart by \cite{P2024}. However, since the ideal $[A,A]$ is solvable, it follows that $[A,A]\subseteq \mathcal{A}(A)\subseteq I$. Therefore, $A/I$ is an associative commutative algebra. It is well known that a subdirectly irreducible associative commutative algebra with an idempotent heart is a field.
\end{proof}

\section{Stability of Radicals under the Gelfand-Dorfman Structure}

Let $B$ be an associative commutative algebra equipped with a derivation $d\in\mathrm{Der}(B)$. Then the algebra $\mathrm{GD}(B,d)$ on the space $B$ with the multiplication defined by
\[x\circ y = xd(y),\]
is a Novikov algebra and is called the {\bf Gelfand-Dorfman construction} associated with the algebra $B$ and the derivation $d$.

\begin{proposition}\label{prop1}
Let $B$ be an associative commutative algebra with a derivation $d$. Then the following statements hold for the Novikov algebra $A=\mathrm{GD}(B,d)$:
\begin{itemize}
\item If $B$ is nil, then $A$ is r-nil;
\item If $B$ is nilpotent, then $A$ is nilpotent; 
\item If $B$ is quasiregular, then $A$ is left quasiregular.
\end{itemize}
\end{proposition}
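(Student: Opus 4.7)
The plan is to translate each property from the associative commutative algebra $B$ to the Novikov algebra $A = \mathrm{GD}(B,d)$ by direct computation, using only the defining rule $x \circ y = x\, d(y)$ together with Leibniz's identity $d(ab) = d(a)\, b + a\, d(b)$ inside $B$. All three items reduce to routine calculations once a suitable normal form for iterated $\circ$-products is identified.

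For the r-nil statement, I would first prove by induction on $n$ that the left-normed power in $A$ is $x^{[n]} = x \cdot d(x)^{n-1}$, the product being taken in $B$. The inductive step is immediate: $x^{[n+1]} = x^{[n]} \circ x = x^{[n]}\, d(x)$. If $B$ is nil then $d(x) \in B$ is nilpotent, so $d(x)^m = 0$ for some $m$, whence $x^{[m+1]} = x\, d(x)^m = 0$ and $A$ is r-nil. For the nilpotency statement, I would show by induction on length that any Novikov monomial of $n$ elements of $A$, under any parenthesization, is a linear combination of $n$-fold products in $B$ whose factors have the form $d^{a}(x_i)$ for various $a \ge 0$. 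The inductive step works because both $u \circ v = u\, d(v)$ and the Leibniz rule preserve the total number of $B$-factors in each summand. Hence $A^n \subseteq B^n$, and $B^n = 0$ yields $A^n = 0$.

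For the left-quasiregular statement, given $x \in A$ the element $d(x)$ lies in $B$ and is therefore quasiregular: there is $u \in B$ with $d(x) + u = d(x)\, u$. I would then set $y = xu - x \in A$ and verify the defining relation $x + y = y \circ x$ by direct expansion: on the one hand $y \circ x = (xu - x)\, d(x) = x\bigl(u\, d(x) - d(x)\bigr)$, and on the other $x + y = xu$; the two sides coincide because $u\, d(x) - d(x) - u = d(x)\, u - d(x) - u = 0$ by the choice of $u$ together with the commutativity of $B$.

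None of the three arguments presents a genuine obstacle; the main points of care are conventional, namely keeping track of left-normed powers in $A$ and matching the left-quasiregularity condition $x + y = y \circ x$ to the quasiregularity of $d(x) \in B$ used above. The most delicate step is the second item, since it must handle arbitrary parenthesizations, but this is exactly what the Leibniz rule controls once one counts factors honestly in the inductive step.
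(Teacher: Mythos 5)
Your proposal is correct, and on the first and third items it coincides with the paper's proof almost verbatim: the same formula $x^{n+1}=x\,d(x)^{n}$ for left-normed powers, and the same quasiinverse $y=xu-x$ built from a quasiinverse $u$ of $d(x)$ in $B$, verified by the same one-line computation using commutativity. The genuine divergence is in the second item. The paper only computes the right-normed products $x_1\circ(x_2\circ\cdots\circ x_n)=x_1d(x_2d(\cdots d(x_n)\cdots))$, observes these land in $B^n$, concludes that $A$ is \emph{left}-nilpotent, and then invokes the nontrivial theorem of Shestakov and Zhang that a left-nilpotent Novikov algebra is nilpotent. You instead prove by induction over arbitrary parenthesizations, using the Leibniz rule to show that every $\circ$-monomial of length $n$ is a linear combination of $B$-products of $n$ factors of the form $d^{a}(x_i)$, hence $A^n\subseteq B^n=0$ directly (and $A^m\subseteq B^m\subseteq B^n=0$ for $m\ge n$, since $B^n$ is an ideal of the associative algebra $B$). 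Your route is more self-contained and elementary --- it needs no structure theory of Novikov algebras at all, only the differential-algebra structure of $\mathrm{GD}(B,d)$ --- at the cost of the bookkeeping induction over parenthesizations; the paper's route keeps the computation to left-normed words but outsources the hard step to the cited result. Both are valid, and your counting argument in the inductive step (each application of $u\circ v=u\,d(v)$ and of Leibniz preserves the number of $B$-factors) is sound.
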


\begin{proof} Let $B$ be a quasiregular algebra and let $x\in A$. Then there exists $z\in A$ such that $d(x)+z-d(x)z=0$. Hence,
\[x+(xz-x)-(xz-x)d(x) = xz-xzd(x)+xd(x) = 0.\]
Therefore, $x\in A$ is left quasiregular with quasi-inverse $xz-x$.

Let $B$ be a nil algebra. We denote by $x^{m,\circ}$ the $m$-th power of $x$ in the algebra $\mathrm{GD}(B,d)$. If $x\in A$ and $d(x)^n = 0$, it follows that
\[x^{n+1,\circ} = xR_{d(x)}^{n} = xd(x)^n = 0.\]
Let $B$ be a nilpotent algebra such that $B^n = 0$, and let $x_1,\dots,x_n\in A$. Then
\[y=x_1\circ(x_2\circ\dots \circ x_n\dots) = x_1d(x_2d(x_3\dots d(x_n)\dots ).\]
It remains to note that $x_{k+1}d(x_{k+2}\dots d(x_n))\in B^{n-k}$. Then $y\in B^n=0$. Therefore, the algebra $A$ is left nilpotent. It follows from \cite{ShestZhang} that $A$ is nilpotent. 
\end{proof}

\begin{example} Let $F$ be a field of characteristic~0, let $C = F[x_1,x_2,\dots]$ be a free associative commutative algebra in countably many variables, and let $B=C/(x_1^2,x_2^2,\dots)$. Since $B$ is $\mathbb{N}$-graded, it admits a derivation $d$ acting on monomials $f$ by \[d(f)=\mathrm{deg}(f)f.\] The algebra $B$ is a nil algebra, but not a nilpotent algebra.

It follows from Proposition~\ref{prop1} that the algebra $A=\mathrm{GD}(B,d)$ is r-nil. However, the algebra $A$ is neither nilpotent nor right nilpotent. Therefore, the Gelfand-Dorfman construction of a nil algebra is not necessarily right nilpotent.
\end{example}

\textbf{Definition}. An element $x\in A$ is called \textbf{right quasiregular} if there exists $y\in A$ such that $x+y=xy$. An algebra is called \textbf{right quasiregular} if every element in it is right quasiregular..

\begin{example} Let $F$ be a field of characteristic $0$. Let $B$ be the algebra of rational functions $f/g$, where $f(x),g(x)\in F[x]$, $f(0)=0$ and $g(0)\neq 0$. Then $B$ is a quasiregular algebra. Consider the derivation $d$ on $B$ defined by the rule
\[d(f/g) = x(f'g-fg')/g^2,\]
where $f'$ denotes the usual derivative of the polynomial $f$. Let $A=\mathrm{GD}(B,d)$. By Proposition~\ref{prop1}, $A$ is left quasiregular. We show that the element $x\in A$ is not right quasiregular. Let
\[x + f/g - x^2(f'g-fg')/g^2 = 0.\]
Then
\[r(x) = xg^2 + fg - x^2(f'g-fg') = 0.\]
Let $\deg(g) = m$ and $\deg (f) = n$, where $g(x) = \sum\limits_{k=0}^m \beta_k x^k$, $\beta_m\neq 0$, and $f(x) = \sum\limits_{k=1}^n \alpha_kx^k$, $\alpha_n\neq 0$. Note that in $r(x)$, the leading monomial of the first term is $\beta_m^2 x^{2m+1}$, that of the second term is $\alpha_n\beta_m x^{n+m}$, and that of the last term is $(n-m)\alpha_n\beta_m x^{n+m+1}$, assuming $n\neq m$. Thus, if $m>n$, then the leading monomial is $\beta_m^2x^{2m+1}$, which forces $\beta_m = 0$, a contradiction. If $n>m$, then the leading monomial is $(n-m)\alpha_n\beta_m x^{n+m+1}$, which forces $\alpha_n\beta_m=0$, a contradiction. Hence, $n=m$. But in this case, the leading monomial again has the form $\beta_m^2 x^{2m+1}$, which implies $\beta_m = 0$, a contradiction. In any case, we obtain a contradiction; hence the element $x$ is not right quasiregular. Hence, $d(x)=x$ and $x^{k,\circ}=x^k\neq 0$ for all $k\ge 1$. It follows that $x$ is not r-nilpotent.
\end{example}

Therefore, the Gelfand–Dorfman construction of a quasiregular algebra is not necessarily right quasiregular or r-nil.

\section*{Acknowledgments}

The author is grateful to V. N. Zhelyabin for his remarks, which helped improve this paper.

The author is grateful to the anonymous referee for useful remarks.

This research is supported by the Russian Science Foundation (project 25-41-00005).

\noindent Alexander Panasenko \\
Sobolev Institute of Mathematics \\
Acad. Koptyug ave. 4, 630090 Novosibirsk, Russia \\
e-mail: a.panasenko@g.nsu.ru

\end{document}